\documentclass{article}

\usepackage{arxiv}

\usepackage[utf8]{inputenc} 
\usepackage[T1]{fontenc}    
\usepackage{hyperref}       
\usepackage{url}            
\usepackage{booktabs}       
\usepackage{amsfonts}       
\usepackage{nicefrac}       
\usepackage{microtype}      

\usepackage{graphicx}
\usepackage{amsmath}
\usepackage{amsthm}
\usepackage[multiple]{footmisc}

\newtheorem{theorem}{Theorem}
\newtheorem{lemma}{Lemma}

\usepackage[linesnumbered,ruled,vlined]{algorithm2e}
\SetKwInput{KwInput}{\texttt{Input}}
\SetKwInput{KwOutput}{\texttt{Output}}
\SetKwSty{texttt}

\makeatletter
\renewcommand{\p@subsection}{}
\makeatother

\title{Tensor-Train Numerical Integration of Multivariate Functions with Singularities}

\begin{document}

\author{
    Lev~I.~Vysotsky$^{1,2,3}$
    \\
    \texttt{vysotskylev@yandex.ru}
    
    \And
    
    Alexander~V.~Smirnov
    $^{4,3}$
    \\
    \texttt{asmirnov80@gmail.com}
    
    \And
    
    Eugene~E.~Tyrtyshnikov
    $^{5,4,3}$\\
    \texttt{eugene.tyrtyshnikov@gmail.com}
}

\footnotetext[1]{HSE University}
\footnotetext[2]{Faculty of Computational Mathematics and Cybernetics, Moscow State University}
\footnotetext[3]{Moscow Center for Fundamental and Applied Mathematics}
\footnotetext[4]{Research Computing Center, Moscow State University}
\footnotetext[5]{Marchuk Institute of Numerical Mathematics of Russian Academy of Sciences}

\maketitle

\begin{abstract}
Numerical integration is a classical problem emerging in many fields of science.
Multivariate integration cannot be approached with classical methods due to the exponential growth of the number of quadrature nodes.
We propose a method to overcome this problem.
Tensor-train decomposition of a tensor approximating the integrand is constructed and used to evaluate a multivariate quadrature formula.
We show how to deal with singularities in the integration domain and conduct theoretical analysis of the integration accuracy.
The reference open-source implementation is provided.
\end{abstract}

\keywords{multidimensional integration \and multivariate integration \and numerical integration \and sampling point \and tensor train \and TT-format}

\section{Introduction}

Numerical integration is a classical problem that emerges in many fields of science.
This paper was inspired by a physical application: a numerical approach to the evaluation of Feynman integrals based on so-called sector decomposition (see, for example, \cite{Bogner2007}).

The classical naive approach (see e.g.~\cite{Piessens1983}) to multivariate integration meets the so-called curse of dimensionality.
Let us for simplicity consider the case of integrating over a unitary hypercube of dimension $d$.
In the classical approach to multivariate integration one  fixes $n$ nodes on each of the coordinate axes and sums the values of the integrand in each of the $n^d$ points with weights prescribed by the used quadrature rule (i.e. Gaussian or Simpson's).
However whatever rule is chosen, the $n^d$ grows exponentially with $d$, so the classical approaches turn out to be completely inefficient in high-dimensional case.


There exist different approaches to the multivariate integration avoiding the curse of dimensionality, both deterministic and nondeterministic.
The nondeterministic methods are usually called Monte-Carlo algorithms.
Historically the first method of this class is the Vegas algorithm originally presented in \cite{Lepage1978, Lepage1980}.
It uses importance sampling for variance reduction.
One might pose a questions why there is a place for randomness in a task when a deterministic result is required.
The idea is that the algorithms randomly disseminating sampling points have almost a hundred percent chance of hitting the ``special'' regions of integration domain.
On the contrary, a deterministic improperly chosen method might skip such regions thus leaning to an incorrect result.
Hence while using random point sampling those algorithms are supposed to end in almost the same result each time with the difference being negligible and smaller than the error estimate.

There are other known nondeterministic algorithms such as Suave combining the advantages of two popular methods: importance sampling by Vegas and subregion sampling in a manner similar to
Miser~\cite{Press1990}.
By dividing into subregions, Suave manages to a certain extent to get around Vegas' difficulty to adapt its weight function to structures not aligned with the coordinate axes.
However, the disadvantage of this method lies it the obligation of the user to provide some extra information of the function structure which is not possible in a general case.

There are also deterministic methods known such as Cuhre using a cubature rule for subregion estimation in a globally adaptive subdivision scheme~\cite{Berntsen1991}.
However, such approaches still suffer from the curse of dimensionality and might compete with the Monte-Carlo class only at lower dimensions such as 2--4, while in the physical problem that inspired this work one might expect up to 10 dimensions or even more.

The development of the Monte-Carlo approach led to emergence of the so-called Quasi Monte Carlo (QMC) integration methods~\cite{Schurer2004, Dick2013}.
The review of them can be found in~\cite{Borowka2017}, where one of them was applied for the evaluation of Feynman integrals.
For a description of a modern implementation of the QMC algorithm see~\cite{Borowka2019}.

A completely new approach to multivariate integration based on the tensor-train decomposition was originally suggested in~\cite{Oseledets2010}.
The idea is to use low-parametric approximation (tensor-train, or TT, decomposition) to the multivariate function to overcome the curse of dimensionality of the naive multivariate integration.
The proposed decomposition can be constructed using the function values computed in moderate number of points.
It also allows to efficiently calculate the multidimensional weighted sum implied by quadrature rule.
However, in~\cite{Oseledets2010} the authors provided only a ``proof-of-concept'' of TT integration and applied it to some model functions.
Recently, in~\cite{Dolgov2020} the method was used to compute high-dimensional (up to 1024 dimensions) integrals and was shown to outperform the stochastic methods.
Moreover, the method seems applicable to very high-dimensional integrals (up to hundreds of thousands of dimensions) like those found e.g. in condensed matter physics, as the computational complexity grows only linearly with the number of dimensions.

In this paper we provide a complete description of the tensor-train-based multivariate integration of functions with singularities.
A configurable and practically applicable algorithm is presented and its reference open-source implementation is provided.
We conduct the theoretical error analysis and show how to deal with some types of singularities in the integration domain.
The algorithm was applied to the computation of a Feynman integral and shown to perform better than the Monte-Carlo-based method.

The rest of the paper is organized as follows.
In Section~\ref{sec:definitions} we introduce concepts and facts necessary for the formulation of the method.
Section~\ref{sec:integration-high-level} contains the bird's eye view of the integration algorithm.
In Sections~\ref{sec:matrix-cross} and~\ref{sec:tt-cross} the details of TT cross approximation approach are provided along with some supporting arguments.
In Section~\ref{sec:tolerance} we consider the problem of choosing the approximation tolerance given the limit on the number of function evaluations.
Our approach to accurate integration of functions with singularities is described in Section~\ref{sec:singularities}.
The theoretical analysis of the integration accuracy is provided in Section~\ref{sec:analysis}.
Finally, the discussion of the implementation details and usage of the library can be found in Section~\ref{sec:implementation}.
There we also provide a comparison of the TT and Monte-Carlo-based methods applied to a computation of a Feynman integral.

\section{Definitions and preliminary facts}\label{sec:definitions}

Let $\mathbb{C}^{n_1 \times\cdots \times n_d}$ denote the set of $d$-dimensional tensors (i.e. arrays) over field $\mathbb{C}$ with mode sizes $n_1, \dots, n_d$.
For a given tensor $A \in \mathbb{C}^{n_1 \times\cdots \times n_d}$ denote $A(i_1, \dots, i_d)$ its element with indices $i_1, \dots, i_d$.
We will use zero-based indexation, i.e. $0 \le i_k \le n_k - 1$.

If in indexation expression $A(i_1, \dots, i_d)$ we replace variables $i_{k_1}, \dots, i_{k_s}$ with colons we obtain an $s$-dimensional tensor $B \in \mathbb{C}^{n_{k_1}, \dots, n_{k_s}}$ with elements
$
B(i_{k_1}, \dots, i_{k_s}) = A(i_1, \dots, i_d).
$
The tensor $B$ is thus determined by ``non-colon'' variables.
As an example, consider a 3-dimensional tensor $A \in \mathbb{R}^{2 \times 3 \times 4}$ with elements $A(i, j, k) = i + j + k$.
Then $B := A(:, :, 2)$ is a $2\times 3$ matrix with elements $B(i,j) = i + j + 2$, and $v := A(1, 1, :)$ is a vector of length $4$ with elements $v(k) = k + 2$.
The notation is adopted from Matlab and Python programming languages.
Other useful adoptions are the $\mathsf{vec}$ and $\mathsf{reshape}$ functions.
The former one makes a ``long'' vector:
\[
\mathsf{vec}(A) \in \mathbb{C}^{n_1\cdots n_d},~\mathsf{vec}(A)(i_1 + i_2 n_1 + \dots + i_d n_1 \dots n_{d-1}) = A(i_1, \dots, i_d),
\]
The $\mathsf{reshape}$ function takes a tensor $A \in \mathbb{C}^{n_1\times\dots\times n_d}$ and new shape $m_1\times\dots\times m_D$ and returns tensor $B \in \mathbb{C}^{m_1\times\dots\times m_D}$ such that $\mathsf{vec}(A)=\mathsf{vec}(B)$ (naturally, $n_1\cdots n_d$ must be equal to $m_1\cdots m_D$).
For each $k=1,\dots,d-1$ the $k$-th unfolding matrix $A_k$ of a given tensor $A$ is defined as 
\[
A_k = \mathsf{reshape}(A, (n_1\cdots n_k)\times(n_{k+1}\cdots n_d)).
\]

We say that a tensor $A\in\mathbb{C}^{n_1 \times\cdots \times n_d}$ admits \emph{tensor-train (TT) decomposition}~\cite{Oseledets2011} if
\begin{equation}\label{eq:TT}
A(i_1, \dots, i_d) = \sum_{\alpha_0=0}^{r_0-1}\dots \sum_{\alpha_{d}=0}^{r_{d}-1} G_1(\alpha_0, i_1, \alpha_1) G_2(\alpha_1, i_2, \alpha_2) \dots G_d(\alpha_{d-1}, i_d, \alpha_d)
\end{equation}
for some natural numbers $r_1, \dots, r_{d-1}$ and 3-dimensional tensors $G_k \in \mathbb{C}^{r_{k-1}\times r_k \times r_{k+1}}$.
Note that we force $r_0$ and $r_d$ to be equal to 1, thus in equation~\eqref{eq:TT} variables $\alpha_0$ and $\alpha_d$ are ``dummy'' ones and tensors $G_1$ and $G_d$ are actually 2-dimensional.
Tensors $G_k$ are called the \emph{cores} and numbers $r_k$ the \emph{compression ranks} (or TT-ranks) of decomposition~\eqref{eq:TT}.
An equivalent way to define tensor-train decomposition is
\begin{equation}\label{eq:TT-matrices}
A(i_1, \dots, i_d) = G_1(:, i_1, :)\dots G_d(:, i_d, :).
\end{equation}
Note that each multiplier $G_k(:, i_k, :)$ above is a matrix (one index is fixed while the two other are ``free'') of size $r_{k-1}\times r_{k}$.
The product is thus correctly defined, moreover, as $r_0 = r_d = 1$ it has size $1 \times 1$, so it is indeed a number from $\mathbb{C}$.
It can be easily checked that definitions~\eqref{eq:TT} and~\eqref{eq:TT-matrices} are equivalent.
The latter is best suited for computations as it exploits fast BLAS level-3 operation of matrix multiplication.
The fact that $A$ admits TT-decomposition with cores $G_1,\dots,G_d$ is denoted $A = \mathsf{TT}(G_1,\dots,G_d)$.

Obviously, the storage required for a given TT-representation of a tensor is proportional to $\sum_k n_k r_{k-1} r_k$ or if all mode sizes are bounded from above by $n$ and compression ranks by $r$, the storage can be estimated as $O(dnr^2)$.

For a given tensor $A\in\mathbb{C}^{n_1 \times\cdots \times n_d}$ we can (at least theoretically) find the smallest compression ranks among all the possible TT-decompositions of it.
These compression ranks are equal to corresponding ranks of unfolding matrices: $r_k =\mathrm{rank}\,A_k$.
The existence of a TT-decomposition with compression ranks equal to $\mathrm{rank}\,A_k$ is proven in~\cite{Oseledets2011}.
The non-existence of TT-decompositions with smaller compression ranks follows from the fact that any TT-decompositions with TT-ranks $r_k$ yields a skeleton decomposition of unfolding matrix $A_k$ with $r_k$ summands.

In practice, however, we are interested in low rank TT-approximations, i.e. tensors $A'$ close to $A$ in some norm that admit TT-decomposition with low ranks.
In~\cite{Oseledets2010} it was proven that if unfolding matrices are approximated by rank-$r_k$ matrices $B^{(k)}$:  
$
\|A_k - B^{(k)}\|_F \le \varepsilon_k,
$
$k=1,\dots,d-1$,
then there exists a tensor $A'$ with compression ranks $r_k$ satisfying
\begin{equation}\label{eq:unfolding-approximation}
\|A' - A\|_F^2 \le \sum_{k=1}^{d-1}\varepsilon_k^2
\end{equation}
(here $\|A\|_F$ denotes Frobenius norm of a tensor, i.e. square root of  $\sum_{i_1,\dots,i_d}|A(i_1,\dots,i_d)|^2$).

If tensors $A$ and $B$ are represented in TT-format (i.e. some TT-decomposition is known for each of them), several basic operations may be performed directly using such representation~\cite{Oseledets2011}.
Addition, Hadamard (element-wise) product, dot product and multidimensional contraction are the examples of such operations.
The dot product of tensors $\langle A, B \rangle$ is defined as
$
\langle A, B \rangle = \sum_{i_i, \dots, i_d} (\overline{A}\circ B)(i_1, \dots, i_d),
$
where $\overline{A}$ is tensor whose elements are complex conjugates of elements of $A$: $\overline{A}(i_1,\dots,i_d) = \overline{A(i_1,\dots,i_d)}$ and
$A \circ B$ denotes element-wise product of $A$ and $B$.
Multidimensional contraction of tensors $A$ with columns $u_k \in \mathbb{C}^{n_k}$ is defined as  $W = \langle \overline{A}, U\rangle$, where $U$ is a tensor with compression ranks equal to 1 and cores $G_k(0, :, 0) = u_k$.

\section{The integration algorithm}
\subsection{High-level description}\label{sec:integration-high-level}
Suppose we are given a function $f: \mathbb{R}^d \to \mathbb{C}$ and need to compute (approximately) the integral
\begin{equation}\label{eq:I(f)}
I(f) = \int_{0}^1\dots\int_0^1f(x_1,\dots,x_d)dx_1\cdots dx_d.    
\end{equation}
We assume that some nodes $\{x_k^{(i_k)}\}_{i_k=0}^{n_k-1}$, $x_k^{(i_k)} \in \mathbb{R}$, and weights $\{w_k^{(i_k)}\}_{i_k=0}^{n_k-1}$, $w_k^{(i_k)} \in \mathbb{R}$, are fixed
for each axis $x_k$.
Consider the $d$-dimensional grid $X$:
\[
X := \{x_1^{(i_1)}\}_{i_1=0}^{n_1-1} \times \dots \times \{x_1^{(i_d)}\}_{i_d=0}^{n_d-1}
\]
and weight tensor $W \in \mathbb{C}^{n_1\times\cdots\times n_d}$ with elements
$
W(i_1, \dots, i_d) = w_1^{(i_1)} \cdots w_d^{(i_d)}.
$
We will approximate integral~\eqref{eq:I(f)} with the sum
\begin{equation}
    S_{X,W}(f) := \sum_{i_1=0}^{n_1-1}\dots\sum_{i_d = 0}^{n_d-1} f\left(x_1^{(i_1)}, \dots, x_d^{(i_d)}\right) W(i_1, \dots, i_d).
\end{equation}
If we denote $A_{X,f}$ the tensor from $\mathbb{C}^{n_1\times\cdots\times n_d}$ with elements $f\big(x_1^{(i_1)}, \dots, x_d^{(i_d)}\big)$, the introduced sum can be written as $S_{X,W}(f) = \langle \overline{A_{X,f}}, W \rangle$.
But even if weights $w_k^{(i_k)}$ are chosen wisely and the sum $S_{X,W}(f)$ accurately approximates $I(f)$, this approach is not free from the ``curse of dimensionality'' as direct computation of $S_{X,W}(f)$ requires $n_1\cdots n_d$ evaluations of $f(x_1,\dots,x_d)$.
Let us assume, however, that $A_{X,f}$ admits an approximation $A'$ with moderate TT-ranks $r_k$.
Then the value $S' = \langle \overline{A'}, W\rangle$ can be computed as multidimensional contraction (as tensor $W$ has compression ranks equal to 1) with complexity $O\left(\sum_k n_k r_k r_{k-1}\right)$ which is polynomial in mode sizes $n_k$ and compression ranks $r_k$.

The main difficulty consists in obtaining a good approximation of $A_{X,f}$ without computing all (or even a significant ratio of) its entries.
For many functions arising from real-world applications this is an achievable target.

Algorithm~\ref{alg:tt-cross} presents the pseudocode for constructing a TT-approximation of a given black-box (i.e. with specified procedure of computing an element with given indices) tensor.
It is described in detail in Section~\ref{sec:tt-cross}.
The workhorse of the method are the subroutines \texttt{TTCrossLeftToRight} (see Algorithm~\ref{alg:cross-tt-pass}) and \texttt{TTCrossRightToLeft} (pseudocode is not presented due to its extreme similarity).
After each pass we obtain a (hopefully) improved approximation to $A$.
During one pass for each dimension $k$ a carefully selected submatrix of the unfolding matrix $A_k$ is approximated by a cross consisting of $r_k$ rows and $r_k$ columns.
For this we use the approach which pseudocode is presented in Algorithm~\ref{alg:matrix-cross}  and which is described in detail in Section~\ref{sec:matrix-cross}).
The obtained cross is used to construct the $k$-th TT core of the approximation.
The lines~\ref{alg:tt-cross-test} and~\ref{alg:tt-cross:effective} of Algorithm~\ref{alg:tt-cross} are responsible for choosing the approximation accuracy according to the user-specified limit on the number of tensor element evaluations (for details refer to Section~\ref{sec:tolerance}).

\begin{algorithm}\label{alg:tt-cross}
\KwInput{(black-box) tensor $A \in \mathbb{C}^{n_1\times\dots\times n_d}$, evaluation count limit $N_{\mathrm{lim}}$}
\KwOutput{TT cores $G_1,\dots, G_d$ of approximation $A'$ of $A$}
    Randomly choose sets of column multi-indices $\widehat{\mathcal{J}}_1,\dots, \widehat{\mathcal{J}}_{d-1}$\;
    $G_1,\dots, G_d, \mathcal{I}_1,\dots,\mathcal{I}_{d-1}\gets \mathtt{TTCrossLeftToRight}(A, \widehat{\mathcal{J}}_1,\dots, \widehat{\mathcal{J}}_{d-1}, \varepsilon_{\mathrm{test}})$
    \label{alg:tt-cross-test}
    \tcp{measure $N_{\mathrm{test}}$}
    $\varepsilon_{\mathrm{eff}} \gets $ effective tolerance
    (see Section~\ref{sec:tolerance})
    \label{alg:tt-cross:effective}\;
    \Repeat{
       $\Big\|\mathsf{TT}(H_1,\dots,H_d) - \mathsf{TT}(G_1, \dots, G_d)\Big\|_F < \varepsilon \Big\|\mathsf{TT}(G_1, \dots, G_d)\Big\|_F$
    }{
        $\widehat{\mathcal{I}}_1,\dots, \widehat{\mathcal{I}}_{d-1}
            \gets \mathcal{I}_1,\dots,\mathcal{I}_{d-1} $ extended with random multi-indices\;
        $H_1,\dots, H_d, \mathcal{J}_1, \dots, \mathcal{J}_{d-1} \gets \mathtt{TTCrossRightToLeft}(A_{X,f}, \widehat{\mathcal{I}}_1,\dots, \widehat{\mathcal{I}}_{d-1}, \varepsilon_{\mathrm{eff}})$\;
        $\widehat{\mathcal{J}}_1,\dots, \widehat{\mathcal{J}}_{d-1} \gets \mathcal{J}_1,\dots,\mathcal{J}_{d-1} $ extended with random multi-indices\;
        $G_1,\dots, G_d, \mathcal{I}_1,\dots,\mathcal{I}_{d-1}
            \gets \mathtt{TTCrossLeftToRight}(A_{X,f}, \widehat{\mathcal{J}}_1,\dots, \widehat{\mathcal{J}}_{d-1}, \varepsilon_{\mathrm{eff}})$\;
    }
    \Return $G_1,\dots,G_d$\;
 \caption{\texttt{TTCross} constructs a TT approximation of a tensor (Section~\ref{sec:tt-cross})}
\end{algorithm}

\begin{algorithm}\label{alg:cross-tt-pass}
\KwInput{(black-box) tensor $A\in\mathbb{C}^{n_1\times\dots\times n_d}$, sets of column multi-indices $\widehat{\mathcal{J}}_1,\dots,\widehat{\mathcal{J}}_{d-1}$,
approximation tolerance $\varepsilon$}
\KwOutput{TT-approximation of $A$ with cores $G_1, \dots, G_{d-1}$, sets of row multi-indices $\mathcal{I}_1,\dots,\mathcal{I}_{d-1}$}
    $\mathcal{I}_0 \gets \{()\}$ \tcp{empty multi-index}
    $r_0 \gets 1$\;
    \For {$k \gets 1,\dots,d-1$}{
        $\widehat{\mathcal{I}}_k \gets \big\{(i_1,\dots,i_k)~:~(i_1,\dots,i_{k-1})\in\mathcal{I}_{k-1}, i_k \in \{0,\dots,n_k-1\}\big\}$\;
        $\mathcal{I}_k, \mathcal{J}_k \gets \mathtt{MatrixCrossApproximate}\big(A(\widehat{I}_k, \widehat{\mathcal{J}}_k), \varepsilon/\sqrt{d-1}\big)$\label{alg:cross-TT-pass:matrix-cross}\;
        $C_k \gets A(\widehat{\mathcal{I}}_k,\mathcal{J}_k)\big(A(\mathcal{I}_k, \mathcal{J}_k\big)^+_\tau$\;
        $r_k \gets |\mathcal{I}_k|$\;
        Store $G_k \gets \mathsf{reshape}(C_k, r_{k-1} \times n_k \times r_k)$\;
     }
     $G_d \gets A(\mathcal{I}_{d-1}, :)$\;
     \Return $G_1,\dots,G_{d}$, $\mathcal{I}_1, \dots,\mathcal{I}_{d-1}$\;
 \caption{\texttt{TTCrossLeftToRight} performs left-to-right pass of TT-cross approximation (Section~\ref{sec:tt-cross})}
\end{algorithm}

\begin{algorithm}\label{alg:matrix-cross}
\KwInput{(black-box) matrix $M\in\mathbb{C}^{m \times n}$,
approximation tolerance $\widehat{\varepsilon}$}
\KwOutput{Row and column indices $\mathcal{I}, \mathcal{J}$ such that $M(:,\mathcal{J})M(\mathcal{I},\mathcal{J})^{-1}M(\mathcal{I},:)$ approximates $M$}
    $M', j_1, r \gets M, 0, 0$\;    
    \Repeat{
        $ \mu \sqrt{(m-r)(n-r)} < \widehat{\varepsilon} \|M-M'\|_F$\label{alg:matrix-cross-stopping}
    }{
        $r \gets r + 1$\;
        $i_r \gets \arg\max_i |M'(i,j_r)|$\;
        $j_{r+1} \gets \arg\max_{j} |M'(i_r,j)|$\;
        $\mu \gets |M'(i_r,j_r)|$\;
        $M' \gets M' - M'(:,j_r)(M'(i_r,j_r))^{-1}M'(i_r,:)$\label{alg:matrix-cross:update}\;
    }
    \Return $\{i_1,\dots, i_r\}, \{j_1,\dots,j_r\}$\;
 \caption{\texttt{MatrixCrossApproximate} computes a cross approximation of a matrix (Section~\ref{sec:matrix-cross})}
\end{algorithm}

\subsection{Cross approximation of matrices}\label{sec:matrix-cross}
In this section we describe methods to approximate a matrix when we are allowed to access only a small fraction of its elements.
There is a well-known result that any matrix $M \in \mathbb{C}^{m \times n}$ of rank $r$ satisfies
\[
M = M(:, \mathcal{J})M(\mathcal{I}, \mathcal{J})^{-1}M(\mathcal{I}, :)
\]
where $\mathcal{I}$ and $\mathcal{J}$ are any size-$r$ row and column index sets such that submatrix at their intersection is nonsingular.
A more subtle result is that a matrix with approximate rank $r$ can be sometimes approximated with matrix $M(:, \mathcal{J}) G M(\mathcal{I}, :)$ for some $r \times r$ matrix $G$.
The most intriguing problem here is the choice of $r$ rows and columns.
It turns out that it makes sense to choose the submatrix with maximum volume (modulus of the determinant) among all the $r\times r$ submatrices: in~\cite{Goreinov2001} it was proven
that if $M(\mathcal{I}, \mathcal{J})$ is such a submatrix and moreover is nonsingular, then
\begin{equation}\label{eq:maxvol-bound}
	\left\|M - M(:, \mathcal{J})M(\mathcal{I}, \mathcal{J})^{-1}M(\mathcal{I}, :)\right\|_C \le (r + 1) \sigma_{r+1}(M).
\end{equation}
Here $\|M\|_C$ denotes Chebyshev norm ($\|M\|_C=\max_{i,j}|M(i,j)|$) and $\sigma_{r+1}(M)$ the $(r+1)$-th singular value of $M$.
The inversion of the matrix $M(\mathcal{I}, \mathcal{J})$ can, however, be numerically unstable if the latter is ill-conditioned.
Actually, the pseudoinverse can be used instead of the inverse:
in~\cite{Goreinov1997} it was proven that if again the submatrix $M(\mathcal{I}, \mathcal{J})$ has maximum volume and is nonsingular then
\[
\left\|M - M(:, \mathcal{J})\big(M(\mathcal{I}, \mathcal{J})\big)^+_\tau M(\mathcal{I}, :)\right\|_2 \le c (\sigma_{r+1}(M) + \tau)\sqrt{1 + r(\max\{m, n\} - r)},
\]
where $c$ and $\tau$ are positive constants and $G^+_\tau$ denotes $\tau$-pseudoinverse of $G$ (if a matrix $B$ has singular decomposition $B = U\Sigma V^*$, and $\Sigma_\tau$ coincides with $\Sigma$ except the elements smaller than $\tau$ are set to zero, then $\tau$-pseudoinverse of $B$ is the pseudoinverse of $U\Sigma_\tau V^*$).

The important practical question is how to find the submatrix of maximal volume.
Alas, this problem is NP-hard~\cite{Civril2009}, so we usually are looking for a submatrix with ``sufficiently'' large volume.
One approach was proposed in~\cite{Bebendorf2000} and it is used in the current implementation.
The idea is to construct approximations to $M$ with increasing ranks.
We start with a random column index $j_1$, choose the row with index $i_1$ satisfying $i_1 = \arg \max_i |M(i, j_1)|$ and use $M_1 = M(:,j_1)(M(i_1,j_1))^{-1}M(i_1,:)$ as the first approximation.
These steps are repeated for the residual matrix $M-M_1$, for the next residual etc., until the latest residual $M-M_1-\dots-M_r$ is small enough or the limit for $r$ is reached.
The indices $\mathcal{I}=\{i_1,\dots,i_r\}$ and $\mathcal{J} = \{j_1, \dots, j_r\}$ often yield a good submatrix.

The Algorithm~\ref{alg:matrix-cross} presents a more rigorous description of the method.
There are two nontrivial moments in it.
First, the line~\ref{alg:matrix-cross:update} looks like we need to compute and store the whole $m \times n$ matrix.
In fact, $M'$ is a black-box matrix and its elements are computed basing on the corresponding elements of $M$ and rank-1 matrices $M_k$ (not present explicitly in pseudocode).
Second, the stopping criterion in line~\ref{alg:matrix-cross-stopping} may also look weird.
Actually, the left part of it is an estimate of $\|M'\|_F$ as there are not more than $(m-r)(n-r)$ nonzero elements in this matrix and its largest in modulus element does not (hopefully) strongly exceed the value of $\mu$.
The value $\|M-M'\|_F$ in the right part of the comparison is our current rank-$r$ approximation of $M$ and can be computed efficiently.
Indeed, if we write down the rank-1 matrices $M_k$ as $M_k=u_kv_k^*$ and denote $U := [u_1, \dots,u_r]$ and $V = [v_1,\dots,v_r]$, then 
\[
\|M-M'\|_F^2 = \|M_1 + \dots + M_r\|_F^2 = \|UV^*\|_F^2 = \mathrm{Tr}(VU^*UV^*) = \mathrm{Tr}(U^*UV^*V).
\]
Matrices $U^*U$ and $V^*V$ have size $r \times r$ and can be computed in time $O((m+n)r^2)$.

\subsection{TT cross approximation}\label{sec:tt-cross}
In this section we give the details of algorithms~\ref{alg:tt-cross} and~\ref{alg:cross-tt-pass} for computing the TT approximation of a black-box tensor.
Suppose we are given $ d-1 $ sets of multi-indices
\begin{align*}
& \widehat{\mathcal{J}}_1 = \left\{\left(i_2 ^ {(1)}, \dots, i_d ^ {(1)}\right), \dots, \left(i_2 ^ {(R_1)}, \dots, i_d ^ {(R_1)}\right) \right\},
         \cdots,
    \widehat{\mathcal{J}}_{d-1} = \left\{i_d ^ {(1)}, \dots, i_d ^ {(R_ {d-1})} \right\}.
\end{align*}
Let us consider the $n_1\times R_1$ matrix $A(:, \widehat{\mathcal{J}}_1)$ and choose its submatrix $ \widehat A_1  = A(\mathcal{I}_1,\mathcal{J}_1) \in \mathbb{C} ^ {r_1 \times r_1} $ of large volume as described in Section~\ref{sec:matrix-cross}.
Also store the corresponding columns as $C_1 := A(:,\mathcal{J}_1)$.
Next, let us take the subtensor $A(\mathcal{I}_1, :,\dots, :)$ of size $r_1\times n_2 \times\dots\times n_d$
and consider the new tensor 
\[
B_1:= \mathsf{reshape}\left(A(\mathcal{I}_1, :,\dots, :), r_1n_2 \times n_3 \times \dots \times n_d\right).
\]
We continue in the same vein with the tensor $B_1$ and index sets $\widehat{\mathcal{J}}_2, \dots, \widehat{\mathcal{J}}_ {d-1} $.
Eventually for each $k=1,\dots,d-1$ the matrices $\widehat{A}_k\in\mathbb{C}^{r_k\times r_k}$ and $C_k \in \mathbb{C}^{r_{k-1}n_k\times r_k}$ will be constructed.
On the last step we work with 2-dimensional tensor $B_{d-2}$ of size $r_{d-2}n_{d-1}\times n_d$.
Now along with matrices $C_{d-1}$ and $\widehat{A}_{d-1}$ we store the row matrix $C_d := B_{d-2}(\mathcal{I}_{d-1}, :)$.
In the paper~\cite{Oseledets2010} it is proven that if
the numbers $r_1, \dots, r_{d-1}$ happen to coincide with the TT-ranks of $A$ and all of $\widehat{A}_k$ are nonsingular, then $A$ admits TT-decomposition with cores
\begin{align*}
G_1 & =C_1 \widehat A_1 ^ {- 1}, \\
G_k & = \mathsf{reshape}(C_k \widehat A_k^{-1}, r_{k-1}\times n_k \times r_k),~~k = 2, \dots, d-1, \\
G_d & = C_d.
\end{align*}

However, we are usually interested in low-rank approximations so our $r_k$ are likely to be smaller than actual TT-ranks of $A$.
Moreover, as discussed in Section~\ref{sec:matrix-cross}, it is more robust to use $\tau$-pseudoinverse of $\widehat{A}_k$ instead of $\widehat{A}_k^{-1}$.
Thus we construct only an approximate TT-decomposition and beside that its quality depends on the choice of index sets $\widehat{\mathcal{J}}_k$.
So to find a good approximation we need to repeat the whole procedure several times.
We start with arbitrary sets $ \widehat{\mathcal{J}}_k$ and after the execution of the above algorithm we obtain multi-index sets of rows, namely $\mathcal{I}_1, \dots, \mathcal{I}_{d-1}$, $|\mathcal{I}_k| = r_k$.
These sets are extended with random multi-indices to get the sets $\widehat{\mathcal{I}}_1, \dots, \widehat{\mathcal{I}}_{d-1}$, $|\widehat{\mathcal{I}}_k| = R_k$.
Now we run the same algorithm ``from right to left'', i.e. we start with $A(\widehat{\mathcal{I}}_{d-1}, :)$ and choose a submatrix of large volume in it.
The we proceed for $k = d-2, \dots, 1$, finally obtaining new approximation to $A$.

Assume that after the $j$-th pass of the above algorithm we obtain the TT-decomposition of the approximation $A^{(j)}$.
How can we estimate its closeness to $A$ or at least the convergence of the process?
In fact, we can efficiently compute $\|A^{(j)} - A^{(j-1)}\|_F$ to compare it with prescribed tolerance:
\[
\left\|A^{(j)} - A^{(j-1)}\right\|_F^2 = \left\langle A^{(j)}, A^{(j)}\right\rangle + \left\langle A^{(j-1)}, A^{(j-1)}\right\rangle - 2\mathrm{Re}\left\langle A^{(j)}, A^{(j-1)}\right\rangle.
\]
Each dot product can be computed efficiently using the TT-decompositions of $A^{(j-1)}$ and $A^{(j)}$ as described in~\cite{Oseledets2011}.

Note that for the sake of more straightforward implementation in the pseudocode of Algorithm~\ref{alg:cross-tt-pass} we use row multi-indices $\mathcal{I}_k$ instead of tensors $B_k$.
Also notice that in line~\ref{alg:cross-TT-pass:matrix-cross} we call subroutine \texttt{MatrixCrossApproximate} with $\widehat{\varepsilon} = \varepsilon/\sqrt{d-1}$.
The idea is that if we use $\widehat{\varepsilon}$-approximations for the unfolding matrices, the inequality~\eqref{eq:unfolding-approximation} lets us hope that the overall error will be bounded by $\widehat{\varepsilon}\sqrt{d-1} = \varepsilon$.

\subsection{Choosing approximation tolerance}\label{sec:tolerance}
This section describes our solution to the problem of choosing the approximation tolerance in Algorithm~\ref{alg:tt-cross}.
On the one hand, this choice can be made by the user (a rough estimate of the desired integration error can be used; for more formal analysis refer to Section~\ref{sec:analysis}).
On the other hand, sometimes the user is limited by computational resources available for the evaluation of an integral.
In such a situation the Monte-Carlo methods are more flexible as one can straightforwardly provide the limit on the number of function evaluations.
In the TT integration approach, however, all the function evaluations are performed inside the \texttt{MatrixCrossApproximate} subroutine.
If at some moment we find out that the limit is reached, we must abort immediately thus not obtaining a high-quality approximation of the current matrix.
Moreover, if this happens (in one of the executions of line~\ref{alg:cross-TT-pass:matrix-cross} of Algorithm~\ref{alg:cross-tt-pass}), we cannot proceed with the current TT-cross pass and are doomed to use the previous approximation.
And what if it was the first pass?

As you can see, the ``hard limit'' approach does not work for our method.
Instead, we propose another strategy.
It is based on the observation that for many real-life tensors the singular values of the unfolding matrices decay exponentially, i.e. \[
\sigma_\ell(A_k) \approx \alpha_k e^{-\beta_k \ell},~\ell=1,\dots,L_k,
\]
where $\alpha_k, \beta_k$ are some positive constants and $L_k$ is the smallest size of $A_k$.
If the rank-$r_k$ approximation to $A_k$ found in \texttt{MatrixCrossApproximate} is close enough to the optimal rank-$r_k$ approximation, we can write
\[
\widehat{\varepsilon}^2 \approx \sum_{\ell=r_k+1}^{L_k}(\sigma_\ell(A_k))^2 \approx \frac{\alpha_k^2}{2\beta_k}e^{-2\beta_k r_k}
\Rightarrow
r_k \approx -\beta_k\ln \widehat{\varepsilon} + \gamma_k.
\]
As we have mentioned, all tensor element evaluations happen in the \texttt{MatrixCrossApproximate} subroutine.
The matrix passed to the subroutine on the $k$-th step has size $r_{k-1}n_k \times R_k$.
All the elements of a cross of width $r_k$ (i.e. exactly $r_{k-1}n_kr_k + (R_k-r_k)r_k$ ones) are evaluated.
If we assume for simplicity that $R_k-r_k \ll r_{k-1}n_k$, we can write for the total number of element evaluations $N$:
\[
N \approx \sum_{k=1}^{d-1}n_kr_{k-1}r_k \approx (\ln\widehat{\varepsilon})^2\Big(\sum_{k=1}^{d-1} n_k\beta_{k-1}\beta_k\Big) + \sum_{k=1}^{d-1}\gamma_k.
\]
So if we ignore the sum of $\gamma_k$, we can assume that the number of evaluations is proportional to the squared logarithm of the tolerance.

Now, our heuristic works as follows (lines~\ref{alg:tt-cross-test} and~\ref{alg:tt-cross:effective} of Algorithm~\ref{alg:tt-cross}).
First, we run the Algorithm~\ref{alg:cross-tt-pass} with some ``test'' tolerance $\varepsilon_{\mathrm{test}}$ (e.g.~$0.01$).
Let us assume it has required $N_{\mathrm{test}}$ evaluations of tensor elements.
If the ``soft limit'' specified by the user is $N_{\mathrm{lim}}$ we will run the following passes with ``effective tolerance'' $\varepsilon_{\mathrm{eff}}$ which is computed basing on the formula 
\[
\frac{N_{\mathrm{lim}} - N_{\mathrm{test}}}{N_{\mathrm{test}}}
=
\frac{(\ln \varepsilon_{\mathrm{eff}})^2}{(\ln \varepsilon_{\mathrm{test}})^2}.
\]

\subsection{Functions with singularities}\label{sec:singularities}
It is well known that the numerical integration of functions with singularities (i.e. unbounded in some point of integration domain) give large errors.
There are several approaches to solve this problem (see e.g.~\cite{Kirkup2019}).
One of the most powerful is the variable substitution.
Let us substitute $x$ with some monotonic function $g(t)$ (here $g^{-1}(x)$ denotes the inverse function):
\[
\int_0^1 f(x)dx = \int_{g^{-1}(0)}^{g^{-1}(1)} f(g(t))g'(t)dt.
\]
After the transformation any quadrature rule (e.g. Gaussian) can be applied.
Note, however, that there is no need to apply it to the function $f(g(t))g'(t)$.
Instead, the quadrature rule may be transformed accordingly, i.e. instead of the nodes $t_i$ and weights $w_i$ we can use the nodes $g(t_i)$ and weights $w_ig'(t_i)$ and apply the new rule directly to the function $f$.
The advantage of this approach is most clearly seen when it used inside the TT integration and saves quite a lot of extra computations.

The appropriate function $g(t)$ is chosen basing on the location and type of the singularity.
As an example, consider logarithmic singularity at zero: $f(x) = O(\ln x)$ as $x \to 0$ (i.e. $f(x)$ approaches negative infinity asymptotically not faster than $\ln x$).
In this case the power transformation $x = t^p$, $p>1$ can be used.
Indeed, if we apply the transformed integral is 
\[
\int_0^1 f(t^p)pt^{p-1}dt = p\int_0^1 O(\ln(t)t^{p-1})dt
\]
and now the integrand is regular (i.e. does not have singularities) as for $\alpha >0$ the function $\ln(t)t^\alpha$ tends to $0$ as $t \to 0$.
Other useful substitutions include $\tanh$-$\sinh$~\cite{Mori2001}
($
x = \tanh(\frac{1}{2}\pi\sinh t)
$)
and erf~\cite{Kirkup2019}
($
x = \frac{1}{2}(1-\mathrm{erf}(t))
$)
ones.

The substitution approach can be easily adopted for TT integration technique if there is only one singularity in the multivariate integration domain $[0,1]^d$.
For simplicity let us assume that it is located at zero.
Then for each $k=1,\dots,d$ we choose a quadrature rule with nodes $x_k^{(i_k)}$ and weights $w_k^{(i_k)}$ such that under the corresponding transforms $x_k=g_k(t_k)$ the new integrand \[
f(x_1,\dots,x_{k-1},g_k(t_k),x_{k+1},\dots,x_d)g'_k(t_k)
\]
viewed as a function of $t_k$ is regular for any $x_1,\dots,x_{k-1},x_k,\dots,x_d$.

\subsection{Error analysis}\label{sec:analysis}
The integration algorithm introduces two kinds of errors: the approximation error (tensor $A'$ with hopefully moderate TT-ranks differs from $A_{X, f}$) and the numerical integration error ($S_{X,W}(f)$ differs from $I(f)$).
In this section we estimate these two type of errors separately and eventually come to the bound of $|I(f) - S'|$.

For $k = 1, \dots, d-1$ we will denote by $I_k(x_{k+1}, \dots, x_d)$ the function
\[
I_k(x_{k+1}, \dots, x_d) = \int_0^1\cdots\int_0^1 f(x_1, \dots, x_d)dx_1\cdots dx_k.
\]
Naturally, we define $I_0(x_1, \dots, x_d) \equiv f(x_1, \dots, x_d)$ and $I_d \equiv I(f)$.

\begin{lemma}\label{lm:integration-error}
Let us assume that for all $k = 1, \dots, d$ two properties hold:
\begin{enumerate}
    \item the quadrature rule with nodes $x_k^{(i_k)}$ and weights $w_k^{(i_k)}$ has nonnegative weights and is exact for constant functions, i.e.
    \begin{equation}\label{eq:good-quadrature}
    w_k^{(i_k)} \ge 0,~~1 = \int_0^1 1 \cdot dx = \sum_{i_k=0}^{n_k-1} w_k^{(i_k)};
    \end{equation}
    \item for all $x_{k+1}, \dots, x_d \in [0,1]$ we have
\begin{equation}\label{eq:partial-integral}
    \left|\int_0^1 I_{k-1}(x_{k}, \dots, x_d)dx_k - \sum_{i_k=0}^{n_k-1} I_{k-1}\left(x_k^{(i_k)}, x_{k+1}, \dots, x_d\right)w_k^{(i_k)}\right| \le \varepsilon_{\mathrm{int}}.
\end{equation}
\end{enumerate}
Then
\begin{equation}
    \left|I(f) - S_{X,W}(f)\right| \le d \varepsilon_{\mathrm{int}}.
\end{equation}
\end{lemma}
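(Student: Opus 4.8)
The plan is to interpolate between $I(f)$ and $S_{X,W}(f)$ by a telescoping sequence of hybrid quantities in which the first $k$ variables are handled by exact integration and the remaining $d-k$ by the quadrature rule. Concretely, for $k=0,1,\dots,d$ I would set
$$
H_k = \sum_{i_{k+1}=0}^{n_{k+1}-1}\cdots\sum_{i_d=0}^{n_d-1} I_k\big(x_{k+1}^{(i_{k+1})},\dots,x_d^{(i_d)}\big)\, w_{k+1}^{(i_{k+1})}\cdots w_d^{(i_d)}.
$$
By the conventions $I_d \equiv I(f)$ and $I_0 \equiv f$ one checks the boundary values $H_d = I(f)$ (no quadrature sums remain) and $H_0 = S_{X,W}(f)$ (every variable is summed against its weights). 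The target estimate then follows from the triangle inequality applied to the telescoping identity $H_d - H_0 = \sum_{k=1}^d (H_k - H_{k-1})$, provided each increment is bounded by $\varepsilon_{\mathrm{int}}$.

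The core computation is to identify $H_k - H_{k-1}$ with the one-dimensional quadrature error of assumption~(2). Using the recursion $I_k(x_{k+1},\dots,x_d) = \int_0^1 I_{k-1}(x_k,\dots,x_d)\,dx_k$, the generic term of $H_k$ carries the integral $\int_0^1 I_{k-1}\,dx_k$ while the corresponding term of $H_{k-1}$ carries the sum $\sum_{i_k} I_{k-1}(x_k^{(i_k)},\dots)\,w_k^{(i_k)}$, with the outer weights $w_{k+1}^{(i_{k+1})}\cdots w_d^{(i_d)}$ identical in both. Subtracting, I would obtain
$$
H_k - H_{k-1} = \sum_{i_{k+1},\dots,i_d}\Delta_k\big(x_{k+1}^{(i_{k+1})},\dots,x_d^{(i_d)}\big)\,w_{k+1}^{(i_{k+1})}\cdots w_d^{(i_d)},
$$
where $\Delta_k(x_{k+1},\dots,x_d)$ is precisely the bracketed expression bounded in~\eqref{eq:partial-integral}. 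Evaluating $\Delta_k$ at the quadrature nodes is legitimate because those nodes lie in $[0,1]$, so assumption~(2) gives $|\Delta_k| \le \varepsilon_{\mathrm{int}}$ pointwise.

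To finish I would bound $|H_k - H_{k-1}|$ by pulling the modulus inside the sum, which uses the nonnegativity of the weights from~\eqref{eq:good-quadrature}, and then factor $\sum_{i_{k+1},\dots,i_d}\prod_{j=k+1}^d w_j^{(i_j)} = \prod_{j=k+1}^d\big(\sum_{i_j} w_j^{(i_j)}\big) = 1$ using exactness for constants, also from~\eqref{eq:good-quadrature}. This yields $|H_k - H_{k-1}| \le \varepsilon_{\mathrm{int}}$ for every $k$, and summing over $k=1,\dots,d$ gives the claimed bound $d\,\varepsilon_{\mathrm{int}}$. No step presents a genuine obstacle, since the argument is a one-variable-at-a-time hybridization; the only point demanding care is that \emph{both} halves of assumption~(1) are needed—nonnegativity to apply the triangle inequality under the sum, and exactness for constants to normalize the trailing weight products to one—together with the tacit fact that the quadrature nodes belong to $[0,1]$ so that~\eqref{eq:partial-integral} may be invoked at them.
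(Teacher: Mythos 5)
Your proof is correct and is essentially the paper's own argument in unrolled form: the paper proves by induction on $k$ the uniform bound $\bigl| I_k(x_{k+1},\dots,x_d) - \sum_{i_1,\dots,i_k} f\bigl(x_1^{(i_1)},\dots,x_k^{(i_k)},x_{k+1},\dots,x_d\bigr)w_1^{(i_1)}\cdots w_k^{(i_k)}\bigr| \le k\varepsilon_{\mathrm{int}}$ for all $x_{k+1},\dots,x_d\in[0,1]$, which is precisely your telescoping partial sum $|H_k-H_0|\le k\varepsilon_{\mathrm{int}}$ stated uniformly in the trailing variables rather than only at the quadrature nodes. Both arguments hybridize one variable at a time, invoke assumption (2) exactly once per variable, use nonnegativity and normalization of the weights in the same way, and share the same tacit requirement (which you rightly flag) that the nodes lie in $[0,1]$ so the hypothesis can be applied at them.
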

\begin{proof}
We will use induction to prove the following inequality for all $k=1, \dots, d$ and all $x_{k+1}, \dots, x_d \in [0,1]$:
\begin{equation}\label{integration-induction}
\left| I_k(x_{k+1}, \dots, x_d) - \sum_{i_1,\dots,i_k} f\left(x_1^{(i_1)}, \dots, x_k^{(i_k)}, x_{k+1}, \dots, x_d\right)w_1^{(i_1)}\dots w_k^{(i_k)}  \right| \le k \varepsilon_{\mathrm{int}}.
\end{equation}
The base case corresponds to $k=1$ and follows immediately from \eqref{eq:partial-integral} as by definition $I_{0}(x_1, \dots, x_d) \equiv f(x_1, \dots, x_d)$.

Now proceed to the induction step.
To estimate the considered difference we add and subtract the same value and then use the triangle inequality:
\begin{align*}
\left| I_k(x_{k+1}, \dots, x_d) - \sum_{i_1,\dots,i_k} f\left(x_1^{(i_1)}, \dots, x_k^{(i_k)}, x_{k+1}, \dots, x_d\right)w_1^{(i_1)}\dots w_k^{(i_k)}  \right|
&\le \\ \le 
\left| I_k(x_{k+1}, \dots, x_d) - 
\sum_{i_k=0}^{n_k-1} I_{k-1}\left(x_k^{(i_k)}, x_{k+1}, \dots, x_d\right)w_k^{(i_k)}\right|
&+ \\ +
\left|\sum_{i_k=0}^{n_k-1} I_{k-1}\left(x_k^{(i_k)}, x_{k+1}, \dots, x_d\right)w_k^{(i_k)} -
\sum_{i_1,\dots,i_k} f\left(x_1^{(i_1)}, \dots, x_k^{(i_k)}, x_{k+1}, \dots, x_d\right)w_1^{(i_1)}\dots w_k^{(i_k)}  \right|.
\end{align*}
The first summand is exactly the left part of \eqref{eq:partial-integral} and thus not greater than $\varepsilon_{\mathrm{int}}$.
The second summand can be estimated as follows:
\begin{align*}
M := \left|
\sum_{i_k=0}^{n_k-1} I_{k-1}\left(x_k^{(i_k)}, x_{k+1}, \dots, x_d\right)w_k^{(i_k)}
-
\sum_{i_1,\dots,i_k} f\left(x_1^{(i_1)}, \dots, x_k^{(i_k)}, x_{k+1}, \dots, x_d\right)w_1^{(i_1)}\dots w_k^{(i_k)} 
\right|
&= \\ =
\left|
\sum_{i_k=0}^{n_k-1} \left(
I_{k-1}\left(x_k^{(i_k)}, x_{k+1}, \dots, x_d\right)
-
\sum_{i_1,\dots,i_{k-1}} f\left(x_1^{(i_1)}, \dots, x_k^{(i_k)}, x_{k+1}, \dots, x_d\right)w_1^{(i_1)}\dots w_{k-1}^{(i_{k-1})}
\right) w_k^{(i_k)}
\right|
&\le \\ \le
\sum_{i_k=0}^{n_k-1}
\left|
I_{k-1}\left(x_k^{(i_k)}, x_{k+1}, \dots, x_d\right)
-
\sum_{i_1,\dots,i_{k-1}} f\left(x_1^{(i_1)}, \dots, x_k^{(i_k)}, x_{k+1}, \dots, x_d\right)w_1^{(i_1)}\dots w_{k-1}^{(i_{k-1})}
\right|
w_k^{(i_k)}.
\end{align*}
By induction hypothesis and the property~\eqref{eq:good-quadrature} of the quadrature rule the inequality chain can be continued:
\begin{align*}
    M \le
\sum_{i_k=0}^{n_k-1}
(k-1)\varepsilon_{\mathrm{int}} w_k^{(i_k)} \le (k-1)\varepsilon_{\mathrm{int}}\sum_{i_k=0}^{n_k-1}
w_k^{(i_k)} = (k-1)\varepsilon_{\mathrm{int}}.
\end{align*}
The induction step is thus proven. 
The conclusion of the lemma follows from \eqref{integration-induction} for $k = d$.
\end{proof}

Let us discuss the applicability of the proven lemma to some classes of functions.
How restrictive is the property \eqref{eq:partial-integral}?
We will consider two examples.
First, if we use the $s$-point Gaussian quadrature rule and the function $f(x_1, \dots, x_n)$ is a polynomial where each variable $x_k$ has degree not greater than $2s-1$, then the quadrature rule gives the exact result: $I(f) = S_{X, W}(f)$.
    Indeed, for every monomial $x_1^{p_1}\cdots x_d^{p_d}$
    \[
    I_{k-1}(x_{k}, \dots, x_d) = \frac{1}{(1+p_1)\cdots(1+p_{k-1})} x_{k}^{p_{k}}\cdots x_{d}^{p_d}
    \]
    and
    \begin{align*}
    \int_0^1 I_{k-1}(x_{k}, \dots, x_d)dx_k 
    = 
    \frac{1}{(1+p_1)\cdots(1+p_{k-1})} x_{k+1}^{p_{k+1}}\cdots x_{d}^{p_d}\int_0^1 x_k^{p_k} dx_k
    = \\ =
    \frac{1}{(1+p_1)\cdots(1+p_{k-1})} x_{k+1}^{p_{k+1}}\cdots x_{d}^{p_d}\sum_{i_k=0}^{n_k-1} \left(x_k^{(i_k)}\right)^{p_k} w_k^{(i_k)}
    =
    \sum_{i_k=0}^{n_k-1} I_{k-1}\left(x_k^{(i_k)}, x_{k+1}, \dots, x_d\right)w_k^{(i_k)},
    \end{align*}
    where the second equality follows from the fact that the Gaussian quadrature rule is exact for polynomials of degree not greater than $2s-1$ (see e.g.~\cite[Thm. 16.5.1]{Tyrtyshnikov2012}).
    By linearity the equality can be extended to the integral of the polynomial $f(x_1,\dots, x_n)$.
    
In the second example we again use the $s$-point Gaussian quadrature rule and the function $f$ has $2s$ continuous partial derivatives w.r.t. each variable with a known bound:
\[
    \left|\frac{\partial^{2s}}{\partial x_k^{2s}} f(x_1, \dots, x_d) \right|
    \le 
    D.
\]
In this case the following inequality holds:
\[
    \left| I(f) - S_{X,W}(f) \right| \le \frac {\left(s!\right)^{4}}{(2s+1)\big(\left(2s\right)!\big)^{3}}\cdot Dd.
\]
Indeed, by Leibniz integral rule (see e.g.~\cite{Protter2012}) we can move differentiation under the integral sign and obtain the following estimate:
\begin{align*}
    \left|\frac{\partial^{2s}}{\partial x_k^{2s}} \int_0^1\cdots\int_0^1 f(x_1, \dots, x_d)dx_1\cdots dx_{k-1}\right|
    &=
    \left|\int_0^1\cdots\int_0^1 \frac{\partial^{2s}}{\partial x_k^{2s}} f(x_1, \dots, x_d)dx_1\cdots dx_{k-1}
    \right|
    \le \\ &\le
    \int_0^1\cdots\int_0^1 \left|\frac{\partial^{2s}}{\partial x_k^{2s}} f(x_1, \dots, x_d)\right| dx_1\cdots dx_{k-1}
    \le
    D.
\end{align*}
It is known (e.g. see~\cite[\S 5.2]{Kahaner1989}) that for some $\xi\in(0,1)$ the integration error $\varepsilon_{\mathrm{int}}$ from \eqref{eq:partial-integral} can be bounded by
\[
    \frac {\left(s!\right)^{4}}{(2s+1)\big(\left(2s\right)!\big)^{3}} \left|\frac{\partial^{2s}}{\partial x_k^{2s}}I_{k-1}(\xi,x_{k+1}, \dots, x_d)\right| \le   \frac {\left(s!\right)^{4}}{(2s+1)\big(\left(2s\right)!\big)^{3}}\cdot D.
\]
The required bound for $|I(f)-S_{X,W}(f)|$ follows immediately.

It can be seen from the above examples that the property~\eqref{eq:partial-integral} is not extremely restrictive and can sometimes be derived from the corresponding one-dimensional results.

Now we can prove the main theorem.

\begin{theorem}
If the conditions of lemma~\ref{lm:integration-error} hold and tensor $A' \in \mathbb{C}^{n_1\times\cdots\times n_d}$ satisfies the inequality
\[
\left\|A_{X,f}-A'\right\|_F \le \varepsilon_{\mathrm{appr}},
\]
then
\[
\left|I(f) - \langle \overline{A'}, W \rangle\right| \le d\varepsilon_{\mathrm{int}} + \varepsilon_{\mathrm{appr}}.
\]
\end{theorem}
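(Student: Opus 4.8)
The plan is to bound the target quantity by the triangle inequality, splitting it into the quadrature error already controlled by Lemma~\ref{lm:integration-error} and a new term measuring the effect of replacing $A_{X,f}$ with its approximation $A'$. Concretely, I would write
\[
\left|I(f) - \langle \overline{A'}, W\rangle\right| \le \left|I(f) - S_{X,W}(f)\right| + \left|S_{X,W}(f) - \langle \overline{A'}, W\rangle\right|,
\]
and observe that the first summand is at most $d\varepsilon_{\mathrm{int}}$ by Lemma~\ref{lm:integration-error}, whose hypotheses are assumed to hold.

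For the second summand I would recall that $S_{X,W}(f) = \langle \overline{A_{X,f}}, W\rangle$ and use linearity of the dot product to rewrite it as $\big|\langle \overline{A_{X,f}-A'}, W\rangle\big|$. Applying the Cauchy--Schwarz inequality for the Frobenius inner product (and noting that complex conjugation preserves the Frobenius norm) then gives
\[
\left|\langle \overline{A_{X,f}-A'}, W\rangle\right| \le \left\|A_{X,f}-A'\right\|_F\,\|W\|_F \le \varepsilon_{\mathrm{appr}}\,\|W\|_F,
\]
using the assumed approximation bound.

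The only nontrivial step, and the crux of the argument, is showing $\|W\|_F \le 1$. Here I would exploit the rank-one product structure $W(i_1,\dots,i_d) = w_1^{(i_1)}\cdots w_d^{(i_d)}$, which makes the sum of squares factor as
\[
\|W\|_F^2 = \prod_{k=1}^d \sum_{i_k=0}^{n_k-1}\big(w_k^{(i_k)}\big)^2.
\]
Since by property~\eqref{eq:good-quadrature} the weights are nonnegative and satisfy $\sum_{i_k} w_k^{(i_k)} = 1$, each factor obeys $\sum_{i_k}\big(w_k^{(i_k)}\big)^2 \le \big(\sum_{i_k} w_k^{(i_k)}\big)^2 = 1$, so the whole product is bounded by $1$. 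Substituting $\|W\|_F \le 1$ into the previous estimate bounds the second summand by $\varepsilon_{\mathrm{appr}}$, and adding the two contributions yields the claimed inequality $|I(f) - \langle \overline{A'}, W\rangle| \le d\varepsilon_{\mathrm{int}} + \varepsilon_{\mathrm{appr}}$.

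I expect the main obstacle to be precisely this last observation: it is the nonnegativity together with the partition-of-unity normalization of the quadrature weights that forces $\|W\|_F \le 1$. Without nonnegativity the inequality $\sum_{i_k}(w_k^{(i_k)})^2 \le \big(\sum_{i_k} w_k^{(i_k)}\big)^2$ may fail, and one could only control $\|W\|_F$ by a potentially large product of ratios between the $\ell^2$ and $\ell^1$ norms of the weight vectors, which would degrade the clean additive bound. Everything else is a routine application of the triangle and Cauchy--Schwarz inequalities together with Lemma~\ref{lm:integration-error}.
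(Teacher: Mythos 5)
Your proof is correct and follows essentially the same route as the paper: the same triangle-inequality split, Lemma~\ref{lm:integration-error} for the quadrature term, Cauchy--Schwarz for the approximation term, and the bound $\|W\|_F \le 1$ from the weight properties~\eqref{eq:good-quadrature}. The only cosmetic difference is in the last step, where you use $\sum_{i_k}\big(w_k^{(i_k)}\big)^2 \le \big(\sum_{i_k} w_k^{(i_k)}\big)^2$ for nonnegative weights, while the paper uses $\big(w_k^{(i_k)}\big)^2 \le w_k^{(i_k)}$ since each weight lies in $[0,1]$; both are valid.
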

\begin{proof}
We start by using the triangle property of modulus:
\begin{align}
\left|I(f) - \langle \overline{A'}, W \rangle\right|
&\le
\left|I(f) - \langle \overline{A_{X,f}}, W \rangle\right| + \left|\langle \overline{A_{X,f}}, W\rangle - \langle \overline{A'}, W \rangle\right|  \nonumber
= \\ &=
\left|I(f) - \langle \overline{A_{X,f}}, W \rangle\right| + \left|\langle \overline{A_{X,f} - A'}, W \rangle\right|.
\label{eq:triangle}
\end{align}
The first term has been estimated in lemma~\ref{lm:integration-error} by $d\varepsilon_{\mathrm{int}}$.
The second summand can be bounded using the Cauchy--Bunyakovsky--Schwarz inequality:
\[
\left|\langle \overline{A_{X,f} - A'}, W \rangle\right| \le \sqrt{\left|\langle \overline{A_{X,f} - A'}, \overline{A_{X,f} - A'}\rangle\right|}\cdot \sqrt{\left|\langle W, W \rangle\right|} = \|A_{X,f} - A'\|_F \|W\|_F.
\]
It suffices to estimate the value of $\|W\|_F$.
Notice that by~\eqref{eq:good-quadrature} all the quadrature weights satisfy $0 \le w_k^{(i_k)} \le 1$, thus we can write
\begin{align*}
    \|W\|_F^2
    &=
    \sum_{i_1,\dots,i_d}\prod_{k=1}^d \left(w_k^{(i_k)}\right)^2
    =
    \prod_{k=1}^d \sum_{i_k=0}^{n_k-1} \left(w_k^{(i_k)}\right)^2
    \le
    \prod_{k=1}^d \sum_{i_k=0}^{n_k-1} w_k^{(i_k)}
    =
    \prod_{k=1}^d 1
    =
    1.
\end{align*}
\end{proof}

\section{Implementation and experiments}\label{sec:implementation}
We have implemented the described algorithm in C++.
The source code is open and distributed under the MIT license.
The repository resides at \url{https://bitbucket.org/vysotskylev/c-tt-library}.
The implementation began as a fork of tensor-train library by Dmitry Zheltkov (\url{https://bitbucket.org/zheltkov/c-tt-library}).
The only serious dependency is OpenBLAS library (\url{https://www.openblas.net}) used for matrix operations.

Several examples of integration invocation are provided in \texttt{tests/test\_integration.cpp}.
The key function is \texttt{TTIntegrate}.
It takes as parameters the integrand, the domain and configuration.
The integrand is a functor with parameters \texttt{int count, double* points, double* result}.
The integrand must be evaluated in \texttt{count} points with coordinates passed in \texttt{points} array (which must be of size \texttt{count * dimensionality}) and the results must be written to \texttt{result} array.
The integration domain is supposed to be an axis-aligned parallelepiped which is specified through the \texttt{box} parameter.
The \texttt{config} parameter allows the user to choose the limit on the number of function evaluations, prescribed accuracy, type of quadrature etc.
For the most recent information on integration parameters and invocation refer to \texttt{README.md} file in the repository.

By default the library uses multi-threading for matrix operations.
However, the calls to the functor are guaranteed to be sequential.
The user is encouraged to use parallel computations to evaluate the function in provided points.
The library strives to pass to the functor batches large enough to keep multiple CPU cores (or even a GPU) busy.
In the applications where multiple integrals are to be computed the parallelization is even more straightforward.

As a model example for integration consider the following integral with a singularity at zero:
\[
\int_0^1\dots\int_0^1 \ln(x_1\cdots x_d) dx_1\cdots dx_d.
\]
The analytical answer is easily computed to be $-d$.
We compare the proposed algorithm with Monte-Carlo type method called Vegas implemented in the Cuba library~\cite{Hahn2005}.
The plot of relative accuracy depending on the number of variables is shown at Figure~\ref{fig:compare}.
The power-3 transform and Gauss-Legendre quadrature were used for the TT integration.
Both algortihms were limited by 1 million function evaluations.

\begin{figure}
    \centering
    \includegraphics[width=0.8\linewidth]{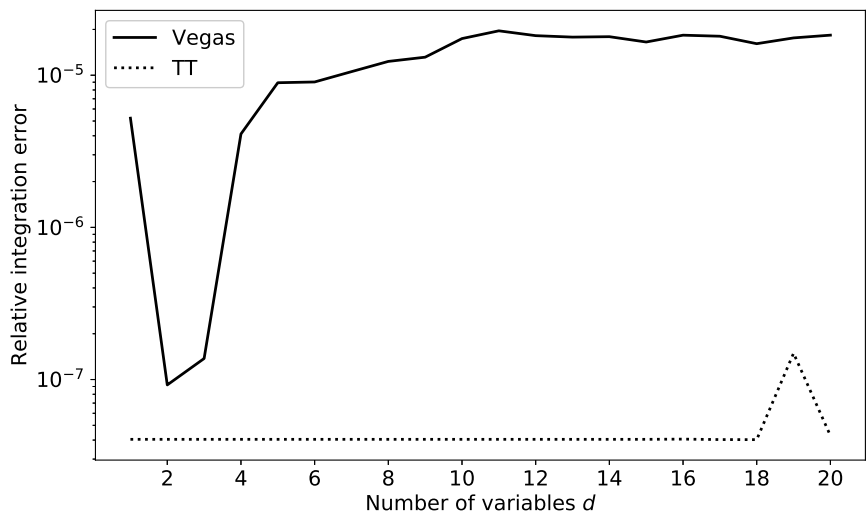}
    \caption{Comparison of TT and Vegas integration on the model example}
    \label{fig:compare}
\end{figure}

We added the new TT integrator as one of the integrators used in a development version of the FIESTA~\cite{Smirnov2015} program used to evaluate Feynman integrals numerically.
Currently there exists a problem to numerically verify some results for Feynman integrals\footnote{They appear as ingredients in quantum chromodynamics (QCD) calculations and in supersymmetric $N=4$ Yang-Mills theory within perturbation theory.
These Feynman integrals are found in many important functions of these theories, in particular, various anomalous dimensions and form factors.} obtained analytically.
Most of them were checked with the Vegas integrator, but we also used one of master integrals as a cross-check for the TT integrator proposed in this paper.
In the sector decomposition approach this Feynman integral is represented as a sum of 1208 multivariate integrals of dimensions 5 or 6.
These integrals were computed numerically using TT and Vegas integration. 
The analytical result obtained independently is a Laurent polynomial of variable $\varepsilon$ so we could compare the accuracy of numerical integration methods.
The formula for the polynomial is 
\[
\frac{1}{24}\varepsilon^{-3} + \frac{9}{16}\varepsilon^{-2} + \Big(\frac{451}{96} + \frac{5\pi^2}{72}\Big)\varepsilon^{-1}+ \frac{18165 + 540\pi^2 - 1040\cdot\zeta(3)}{576}
\]
(here $\zeta(s)$ denotes Riemann Zeta function).
The integrators (TT and Vegas) were allowed to evaluate the integrand in 1 million points.
For TT integration we used Gauss-Legendre quadrature with 13 nodes on each axis and performed power transform (see Section~\ref{sec:singularities}) of form $x = t^3$ as the integrand has a logarithmic singularity at zero.
The relative errors for each term are summarized in Table~\ref{table:errors}.

\begin{table}[]
\centering
\begin{tabular}{|l|l|l|l|l|}
\hline
      & $\varepsilon^{-3}$     & $\varepsilon^{-2}$   & $\varepsilon^{-1}$     & $\varepsilon^{0}$     \\ \hline
TT    & $27.3 \times 10^{-6}$  & $0.3 \times 10^{-6}$ & $0.06  \times 10^{-6}$ & $0.88 \times 10^{-6}$ \\ \hline
Vegas & $188.6 \times 10^{-6}$ & $7.1 \times 10^{-6}$ & $0.11 \times 10^{-6}$  & $3.96 \times 10^{-6}$ \\ \hline
\end{tabular}
\vspace{0.5cm}
\caption{Relative errors of numerical integration for a Feynman integral}
\label{table:errors}
\end{table}

The results are quite promising and show that in some circumstances the new integrator can outperform the Vegas algorithm.
We are working on a new release of the FIESTA program where the new TT integrator might become one of the key components.

\section{Acknowledgements}
The work is supported by Russian Ministry of Science and Higher Education, agreement No. 075-15-2019-1621.

\end{document}